\documentclass[11pt, reqno]{amsart}

\title[$\mathcal{L}^{p}$- and pointwise convergence of families of functions]{Relations between $\mathcal{L}^{p}$- and pointwise convergence of families of functions indexed by the unit interval.}

\newcommand{\Kl}{\left(}
\newcommand{\Kr}{\right)}

\newcommand{\Ra}{\rightarrow}

\newcommand{\hh}{\hspace{4pt}}

\newcommand{\NN}{\mathbb{N}}
\newcommand{\RR}{\mathbb{R}}

\newcommand{\gG}{\gamma}

\newcommand{\dD}{\delta}

\newcommand{\eE}{\varepsilon}

\newcommand{\oO}{\omega}
\newcommand{\OO}{\Omega}
\newcommand{\fF}{\varphi}
\newcommand{\lL}{\lambda}
\newcommand{\sS}{\sigma}

\newcommand{\Ii}{\mathcal{I}}

\newcommand{\Cc}{\mathcal{C}}

\newcommand{\di}{\mathrm{d}}

\newtheorem{theorem}{Theorem}[section]
\newtheorem{lemma}[theorem]{Lemma}
\newtheorem{proposition}[theorem]{Proposition}
\newtheorem{corollary}[theorem]{Corollary}
\newtheorem{remark}[theorem]{Remark}

\begin{document}

\begin{abstract}
We construct a variety of mappings from the unit interval $\mathcal{I}$ into $\mathcal{L}^p([0,1]),1\leq p<\infty,$ to generalize classical examples of $\mathcal{L}^p$-converging sequences of functions with simultaneous pointwise divergence. By establishing relations between the regularity of the functions in the image of the mappings and the topology of $\mathcal{I}$, we obtain examples which are $\mathcal{L}^p$-continuous but exhibit discontinuity in a pointwise sense to different degrees. We conclude by proving a Lusin-type theorem, namely that if almost every function in the image is continuous, then we can remove a set of arbitrarily small measure from the index set $\mathcal{I}$ and establish pointwise continuity in the remainder.
\end{abstract}
\maketitle
\section{Introduction}

\subsection{Motivation and Overview}
Examples of sequences of real functions on a compact domain which have a limit in $\mathcal{L}^p$, but do not converge pointwise are well known. Their construction is based on the fact that any interval can be covered infinitely often by a sequence of subintervals of vanishing lengths. Take, for instance, the sequence $(f_i)_{i\in \mathcal{I}}$ of characteristic functions $f_i=\chi_{I_i},i\in \mathbb{N}$, of the intervals $I_i=[\frac{i}{2^k}-1,\frac{i+1}{2^k}-1],$ where $k$ is the unique integer with $2^k\leq i<2^{k+1}$. After applying a suitable molifier to smoothen each member of the sequence, we can see that this pointwise divergence is not affected by the smoothness of the functions. In such examples, only the order of the index set is relevant. We can observe, however, that a simple topology is induced in a natural way by the convergence of the sequence. It is not obvious whether examples of this type can be extended to index sets of a more complex topological structure. We wish to address the case of a 
continuous curve $f$ which maps $\mathcal{I}=[0,1]$ into $\mathcal{L}^p([0,1])$ and generalize examples like the above. In our setting, the index set $\mathcal{I}$ has a non-trivial topological structure of its own, which turns out to interact with the regularity properties of the family $\{f_t,t\in\mathcal{I}\}.$\\

Curves such as $\{f_t,t\in\mathcal{I}\}$ often appear in semigroup theory as solutions of PDEs. However, the smoothing properties of the operators in these settings usually result in a high regularity for the solutions for every $t>0$, and therefore pointwise convergence comes naturally. Even for the more anomalous case of $t=0$, pointwise convergence can often be deduced by using tools from harmonic analysis or potential theory. In this paper, no underlying process is assumed. We investigate the pointwise behaviour of the curves in a purely real analytic way.\\

By making different assumptions regarding the properties of the functions $f_t$, we construct two example curves in $\mathcal{L}^p$ which lack pointwise convergence almost everywhere. The first example is constructed in Section~2, where we assume that $\{f_t,t\in\Ii\}\subset C(\OO)$ and that almost all $f_t$ are smooth. In Section~3 we then show that the criteria on the regularity $f_t$ are optimal. We demonstrate that the structure of $\Ii$ renders ``everywhere pointwise divergence'' impossible, and that higher regularity always implies better pointwise convergence properties.\\

In Section~4 we remove the continuity requirement and construct a curve of highly irregular functions. For this curve, we not only have everywhere pointwise divergence, but also, for every subset $T$ of $\mathcal{I}$ with positive measure, the restriction $f_{|T}$ exhibits pointwise divergence almost everywhere.\\

Finally, Section~5 is devoted to proving that the discontinuity of $f_{t}$ is necessary to obtain a curve that exhibits such a highly pointwise divergence. In particular, the example in Section~4 motivates a special case of our main result Theorem~5.2., which can be interpreted as a refined version of Lusin's Theorem in two variables.
 
\subsection{Notation}
Throughout, $\mathcal{I}$ is the unit interval $[0,1]$, equipped with the standard norm $|\cdot|$ and the corresponding Borel-$\sS$-field. Lebesgue measure on $\mathcal{I}$ is denoted by $\mu$. We study functions $f:\Ii\times\OO\longrightarrow \RR$ of two real variables, we will usually, for $t\in \Ii$, write $f_t$ for the function $f(t,\cdot)$ in one real variable to stress the difference between ``time'' and ``space'', but revert to write $f$ as a function of two variables when it is notationally more convenient. The spacial domain $\OO\subset\RR$ of the functions $f_t,t\in\mathcal{I}$, can be chosen to be any interval of $\RR$ equipped with its Borel $\sS$-field and Lebesgue measure. We take $\OO=[0,1]$ for convenience in the construction of the examples. We denote Lebesgue measure by $\lambda$ to avoid confusion with the "time`` interval $\mathcal{I}$, whenever we refer to space, i.e. when measuring sets in the domain and range of the real functions $f_t,t\in [0,1]$.\\

$\mathcal{L}^p(\OO,\RR,\lL)=\mathcal{L}^p$, for $1\leq p<\infty$, denotes the space of real-valued $p$-integrable functions on $\OO$, equipped with the topology induced by the seminorm $\|\cdot\|_p.$ Furthermore, we write $\mathcal{W}^{1,p}$ for the space of all absolutely continuous functions with derivatives belonging to $\mathcal{L}^{p}$ and use the standard notation $C(\OO)$ and $C^{\infty}(\OO)$ for the space of continuous real valued functions on $\OO$ and the space of real valued smooth functions on $\OO\setminus\partial\OO$.  

\begin{remark}
Note that we do not identify almost everywhere indentical members of $\mathcal{L}^p$, since all our constructions are pointwise. To prove lack of convergence at a point $t$, we choose a sequence ${t_{n}}$  converging to $t$ and assure, that $f_{t_n}$ diverges pointwise on a set of positive measure. Therefore the established irregularity can not be avoided by choosing different ''versions`` of $f_t$ and trivial counterexamples like the continuous transport of a set of measure zero are excluded.
\end{remark}

\section{Construction of the first example}
We begin by showing that there is a $\mathcal{L}^p$-continuous curve of continuous functions, along which pointwise convergence can be established almost nowhere. Moreover, this irregularity is achieved while keeping almost all functions along the curve smooth.

\begin{theorem}\label{Ex1}
	Let $1\leq p<\infty$ and $K\subset[0,1]$ be a meager $F_\sigma$. There is a continuous mapping $f$ of $[0,1]$ into $\mathcal{L}^p(\OO)$, satisfying\vspace{6pt}\\
	(i) $f_t$ is absolutely continuous for all $t\in[0,1]$,\vspace{6pt}\\
	(ii) $f_t\in C^\infty(\OO)$ for every $t\in K$,\vspace{6pt}\\
	but also\vspace{6pt}\\
	(A) for every $t\in K$ there exists a sequence $(t_n)_{n\in\NN}$ with $\lim_{n\Ra\infty}t_n=t$ such that $$\lambda\big(\{x\in\OO:\;(f_{t_{n}}(x))_{n\in\NN}\textrm{ is Cauchy}\}\big)=0. $$ In particular, if $\mu (K)=1$, then the conditions in (ii) and (A) hold for $\mu$-a.e. $t\in[0,1].$
\end{theorem}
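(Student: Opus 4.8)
The plan is to combine a fixed \emph{typewriter} oscillation in the spatial variable $x\in\OO$ with a Cantor-type scheme in the time variable $t\in\Ii$ that is adapted to $K$. For the spatial ingredient I fix once and for all a sequence of smooth bumps $g_1,g_2,\dots\in C^\infty(\OO)$ with $0\le g_n\le 1$, where $g_n$ approximates the indicator $\chi_{\OO_n}$ and $(\OO_n)_{n\in\NN}$ enumerates the dyadic subintervals of $\OO=[0,1]$ level by level (first $\OO$ itself, then the two halves, then the four quarters, and so on). The only property I need is the classical one: for $\lL$-a.e.\ $x\in\OO$ the value $g_n(x)$ is close to $1$ for infinitely many $n$ and close to $0$ for infinitely many $n$, so that $(g_n(x))_{n\in\NN}$ fails to be Cauchy, while $\|g_n\|_p\Ra 0$. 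This is exactly the mechanism behind the classical $\Ll^p$-convergent, pointwise-divergent sequence.

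For the temporal ingredient, write $K=\bigcup_{m\in\NN}C_m$ with each $C_m$ closed, nowhere dense, and $C_1\subseteq C_2\subseteq\cdots$. I would build a nested sequence of finite families of disjoint closed \emph{tiles} $\mathcal{T}_m$ with $\bigcup\mathcal{T}_m\supseteq C_m$, refining ($\mathcal{T}_{m+1}$ subdivides $\mathcal{T}_m$) and shrinking onto $C_m$ in the sense that every point off $C_m$ eventually leaves the tiles. Because each $C_m$ is nowhere dense, the complementary gaps of $\mathcal{T}_m$ accumulate at every point of $C_m$ at arbitrarily small scale. In each level-$m$ gap $G$ I would insert a whole typewriter block: a sequence of smooth sweeps, the $n$-th of which slides the bump $g_n$ continuously across $\OO$, with $f_t\equiv 0$ at the two endpoints of $G$ and between consecutive sweeps, and with the blocks accumulating toward the endpoints of $G$, hence toward $C_m$. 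The amplitudes and $\Ww^{1,p}(\OO)$-norms of the level-$m$ bumps would be forced to decay so fast in $m$ that the superposition over all levels defines, for each fixed $t$, a function $f_t\in\Ww^{1,p}(\OO)$, giving (i).

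The verification then splits into three checks. First, $\Ll^p$-continuity of $t\mapsto f_t$: inside a single sweep the map is continuous because the bump merely translates; at the junctions ($f_t=0$) the pieces glue continuously; and at a point $t_\ast$ where infinitely many levels accumulate, the decay of the level-$m$ norms forces $\|f_s-f_{t_\ast}\|_p\Ra0$. Second, the regularity (ii): for $t\in K$ we have $t\in C_{m_0}$ for some $m_0$, and since the tiles of every level $m\ge m_0$ cover $C_{m_0}$, the point $t$ lies in a tile — not a gap — at all levels $m\ge m_0$; hence only finitely many levels contribute to $f_t$, which is therefore a finite sum of smooth bumps and lies in $C^\infty(\OO)$. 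Third, the divergence (A): given $t\in K$, I use that gaps accumulate at $t$ to choose, for each $n$, a gap within distance $1/n$ of $t$ and a time $t_n$ in that gap at which $f_{t_n}=g_n$; then $t_n\Ra t$ and $f_{t_n}(x)=g_n(x)$, so $(f_{t_n}(x))_{n\in\NN}$ is non-Cauchy for $\lL$-a.e.\ $x$, which is precisely the assertion with exceptional set of measure zero. The final ``in particular'' is then immediate: if $\mu(K)=1$, then (ii) and (A), holding for all $t\in K$, hold for $\mu$-a.e.\ $t\in\Ii$.

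The hard part will be the simultaneity built into the second paragraph: the \emph{same} curve $f$ must exhibit the full typewriter oscillation when approaching \emph{every} point of $K$, while remaining $\Ll^p$-continuous and keeping $f_t$ smooth precisely where required. This forces a careful quantitative balance — the gaps of the time-scheme must be dense enough around $K$ and host fine enough typewriter blocks to realize the covering-infinitely-often / missing-infinitely-often property along some $t_n\Ra t$ for each $t\in K$, yet the amplitudes and derivatives of the bumps must decay fast enough across levels that the infinite superposition stays in $\Ww^{1,p}(\OO)$ and the curve stays $\Ll^p$-continuous. Reconciling these opposing demands, rather than any single estimate, is the core of the argument.
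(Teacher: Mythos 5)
Your overall architecture matches the paper's: decompose $K=\bigcup_m C_m$ into closed nowhere dense sets, attach sweeping bumps of level-dependent amplitude to the time intervals complementary to level $m$, and witness divergence at $t\in K$ using the first level that contains $t$. However, there is a genuine gap in the divergence step, and it sits exactly where you defer the work. Your temporal scheme uses \emph{finite} families of tiles $\mathcal{T}_m$ at each level; the complementary gaps of a finite family of closed intervals covering $C_m$ form a finite collection and therefore cannot accumulate at a typical point of $C_m$ (only at the finitely many tile endpoints). Gaps do accumulate at every $t\in K$, but only across levels $m\to\infty$; and since the level-$m$ amplitudes must decay --- you need this for $\Ww^{1,p}$-membership of the infinite superposition at $t\notin K$ and for $\Ll^p$-continuity --- any sequence $t_n\to t$ you extract sees oscillations of vanishing amplitude, so $(f_{t_n}(x))_{n\in\NN}$ is in fact Cauchy and (A) fails. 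The paper avoids this by taking the ``tiles'' of level $m$ to be $C_m$ itself and the ``gaps'' to be the full \emph{countable} family of its complementary intervals: since $C_m$ is closed and nowhere dense, these intervals accumulate at every point of $C_m$, so every $t$ whose first level is $m_0$ is approached by level-$m_0$ sweeps carrying the \emph{fixed} amplitude of that level. That same-level accumulation is the one property your scheme must have and, as described, does not.

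Second, even after correcting the tiling, the assertion $f_{t_n}=g_n$ is not available: $f_{t_n}$ is the superposition over all levels, and the contributions of the levels other than the witnessing one must be shown to be small compared with that level's fixed oscillation amplitude. The paper does this quantitatively --- uniform continuity of the finitely many lower levels near $t$, plus a geometric tail bound of order $\sum_{h>i}4^{-h}$ dominated by the level-$i$ oscillation $\tfrac{2}{3}4^{-i}$. Your closing paragraph correctly identifies this balance as the core of the argument, but the proposal leaves it undone, and with the finite-tile scheme it cannot be completed. The spatial ingredient (typewriter bumps on dyadic intervals versus the paper's single translating Gaussian combined with a compactness argument to cover all $x$) is an acceptable variant and is not where the difficulty lies.
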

\begin{proof}
 	Without loss of generality, we will assume that $\{0,1\}\subset K$. We can represent $K=\bigcup_{i=1}^\infty K_{i}$, where $\{0,1\}\subset K_1\subset K_2\subset\dots$ are closed nowhere dense subsets of $[0,1]$. For each $i$, the complement $K_i^C$ can be represented as a countable union $\bigcup_{j=1}^\infty (r_{i,j},s_{i,j})$ of disjoint open intervals, whose lengths we denote by $l_{i,j}=\mu((r_{i,j},s_{i,j})).$ In this setting define
				$$f^{(i)}(t,x)=\fF_i(t)\gG_{i}(t,x),$$
	where		$$\fF_i(t)=	\begin{cases}
						\frac{2j}{l_{i,j}}(t-r_{i,j})	&\mbox{if }t\in(r_{i,j},r_{i,j}+\frac{l_{i,j}}{2j}),\\
						1	&\mbox{if }t\in[r_{i,j}+\frac{l_{i,j}}{2j},s_{i,j}-\frac{l_{i,j}}{2j}],\\
						\frac{2j}{l_{i,j}}(t-s_{i,j})	&\mbox{if }t\in(s_{i,j}-\frac{l_{i,j}}{2j},s_{i,j}),\\
						0	&\mbox{otherwise}	
						\end{cases}$$
	and			$$\gG_{i}(t,x)=	\begin{cases}
							\frac{1}{4^i}\exp\Kl \frac{-\pi\Kl x-\frac{t-r_{i,j}}{l_{i,j}}\Kr^{2}}{l_{i,j}^{2p}} \Kr	&\mbox{if }t\in(r_{i,j},s_{i,j})\mbox{ for some }j\in\NN\\
								&\text{ and}\hh x\in[0,1],\\
							0	&\mbox{otherwise.}			
						\end{cases}$$
	A straightforward calculation shows that, for all $i\in\NN$, $f^{(i)}(t,x)\leq 4^{-i}$ for all $(t,x)\in[0,1]\times\Omega$, and thus $\|f^{(i)}(t,\cdot)\|_p\leq4^{-i}$ for all $t\in[0,1]$. For fixed $i\in\NN$ we next show $\mathcal{L}^p$-continuity of $f^{(i)}$ in the first variable. We will prove it for $t$ being approximated from the right and the rest can be proved analogously. Application of the triangle inequality and convexity of $(\cdot)^p$ yield that, for all $t, u\in[0,1]$,
	\begin{equation}\label{eq:lp_cont}
	\begin{split}
	& \|f^{(i)}(t,\cdot)-f^{(i)}(u,\cdot)\|_p^{p} \\
	& \leq2^{p-1}\big(|\varphi_i(t)-\varphi_i(u)|^p\|\gamma_i(t,\cdot)\|_p^p+|\varphi_i(u)|^p\|\gamma_i(t,\cdot)-\gamma_i(u,\cdot)\|_p^p\big).
	\end{split}
	\end{equation} We now distinguish three cases. Firstly, if $t\in(r_{i,j},s_{i,j})$ for some $j\in\NN$, then the right hand side of \eqref{eq:lp_cont} vanishes as $u$ converges to $t$, since $\varphi_i$ is continuous and $\gamma_i(t,\cdot)$ is $\mathcal{L}^p$-continuous in $t$. Secondly, if $t=r_{i,j}$ for some $j\in\NN$, then $\lim_{u\to t}\varphi(u)=\varphi(t)=0$. Thus the right hand side of \eqref{eq:lp_cont} can be made arbitrarily small by chosing $u$ close to $t$, since it is bounded by a positive multiple of $|\varphi(u)|^p$, due to the uniform boundedness of $\gamma_i$. Finally, if $t$ is not contained in any of the intervals $\{[r_{i,j},s_{i,j})\}_{j\in\NN}$, then either $f^{(i)}(u,\cdot)\equiv 0$ for all $u$ in a set of the form $[t,t+\epsilon)$ or $t$ is an accumulation point, from the right, of a subsequence $((r_{i,j_k},s_{i,j_k}))_{k\in\NN}$ of nonempty intervals with $\lim_{k\to\infty}l_{i,j_k}=0$. In the latter case, we can assume w.l.o.g. that $(s_{i,j_k})_{k\in\NN}$ is montonically 
decreasing and that $u$ in \eqref{eq:lp_cont} is an element of $(t,s_{i,j_k})$. Both $\gamma_i$ and $\varphi_i$ are uniformly bounded and $\gamma_i(t,\cdot)\equiv 0$. The sum in \eqref{eq:lp_cont} is therefore bounded by a constant multiple of $\mu((t,s_{i,j_k}))$. As $u$ approximates $t$ from the right, $k$ can be chosen larger and the bound can be made arbitrarily small, since $\lim_{k\to\infty}s_{i,j_k}=t$. This shows that $f^{(i)}$ is $\mathcal{L}^p$-continuous from the right in $t$.  Combining all three cases, $f^{(i)}(t,\cdot)$ is thus $\mathcal{L}^{p}$-continuous for all $t$ in the compact interval $[0,1]$ and is therefore also uniformly continuous on $K_{i}^{C}$.
 We set $$f_t(x)=\sum_{i=1}^\infty f^{(i)}(t,x),$$ which defines a function $f:[0,1]\longrightarrow \mathcal{L}^{p}(\OO)$ for which the properties stated in the theorem can be verified. To show that $f$ is a continuous mapping of the unit interval into $(\mathcal{L}^p(\OO),\|\cdot\|_p)$ we fix $\eE>0$, choose $l\in\NN$ such that $2^{-l}<\frac{\eE}{2}$ and estimate for $t,u\in\mathcal{I}$, using the triangle inequality and Fatou's Lemma
	\begin{align*}
	 	&\Kl \int_\OO |f_u(x)-f_t(x)|^{p}\di x \Kr^{\frac{1}{p}}\\
		&=\Kl\int_\OO\Big|\sum_{i=1}^\infty\Kl f^{(i)}(u,x)-f^{(i)}(t,x)\Kr\Big|^{p}\di x\Kr^{\frac{1}{p}}\\
		&\leq\sum_{i=1}^\infty\Kl\int_\OO |f^{(i)}(u,x)-f^{(i)}(t,x)|^{p}\di x \Kr^{\frac{1}{p}}\\
		&=\sum_{i=1}^l\Kl \int_\OO |f^{(i)}(u,x)-f^{(i)}(t,x)|^{p}\di x\Kr^{\frac{1}{p}}\\
		&\hh\hh\hh+\sum_{i=l}^\infty\underbrace{\Kl \int_\OO |f^{(i)}(u,x)-f^{(i)}(t,x)|^{p}\di x\Kr^{\frac{1}{p}}}_{\leq\frac{2}{4^i}}.
	\end{align*}
	The finite left hand side summand can be made smaller than $\frac{\eE}{2}$ by choosing $u$ close to $t$ and the right hand side summand is bounded by $2^{-l}$ and therefore by $\frac{\eE}{4}$. Since $t$ and $\eE$ can be chosen arbitrarily, we conclude that $\mathcal{L}^p$-continuity holds for every $t\in[0,1]$.\\ 
	
	Recalling $K=\bigcup_{i=1}^\infty K_{i}$, for any $t\in K$ there exists an index $i$ for which $t\in K_i$. Hence $f(t,\cdot)$ is a finite sum of $\Cc^\infty(\OO)$-functions and therefore smooth, so (ii) holds. For (i), absolute continuity only needs to be verified for $t\in K^C$. Clearly all $f^{(i)}(t,\cdot)$ are absolutely continuous and so are the finite sums $\sum_{i=1}^kf^{(i)}(t,\cdot)$. For existence of the derivative $\frac{\di}{\di x}f(t,x),$ we note that
	\begin{align*}\label{diffcon}
	  &\frac{\di}{\di x}\frac{1}{4^i}\exp\Kl \frac{-\pi\Kl x-\frac{t-r_{i,j}}{l_{i,j}}\Kr^{2}}{l_{i,j}^{2p}} \Kr\\
	  &=-\frac{1}{4^i}\frac{2\pi}{l_{i,j}^{2p}}\Kl x- \frac{t-r_{i,j}}{l_{i,j}}\Kr \frac{1}{4^i}\exp\Kl \frac{-\pi\Kl x-\frac{t-r_{i,j}}{l_{i,j}}\Kr^{2}}{l_{i,j}^{2p}} \Kr,
	 \end{align*}
	which is still summable in $i$. Hence the first derivative of $f_t$ exists and it is continuous for all $t\in K^C$ a.e. on $\OO$, which implies absolute continuity.\\
	
	The next step is to show that (A) holds. We do so by constructing for every $t\in K$ a sequence $(t_{n})_{n\in\NN}$ such that $f(t_n,\cdot)$ has the desired property. We observe first, that if we fix $i,j\in\NN$, $x\in[\frac{1}{j},1-\frac{1}{j}]$ and set $\tau_x=r_{i,j}+xl_{i,j}$, then continuity of $f(\tau_x,\cdot)$ implies that the sets $I_x=\{y:f(\tau_x,y)>\frac{2}{3}4^{-1}\}$ are open. The defintions of $f^{(i)}$ and $f$ imply that $f(\tau_x,x)\geq 4^{-i}$, thus $x\in I_x$ and $\bigcup_x I_x$ is an open cover of the interval $[\frac{1}{j},1-\frac{1}{j}]$. By compactness, we can find a finite subcover, i.e. there is an integer $k$ and a $k$-tuple $\tau=\big(\tau^{1},\tau^{2},...,\tau^{k}\big)$, where $\tau^l\in(r_{i,j},s_{i,j})$, for $1\leq l\leq k$, with the property that for every $x\in [\frac{1}{j},1-\frac{1}{j}]$ there exists an index $l(x)\in\{1,\dots,k\}$ such that $f^{(i)}(\tau^{l(x)},x)>\frac{2}{3}4^{-i}$.\\
	
	Now let $t\in K$ be fixed and let $i=i(t)=\min\{j\in\NN:\;t\in K_{j}\}$. Since $K_{i}$ is nowhere dense, there exists a subsequence of intervals $\big((r_{i,j_{n}},s_{i,j_{n}})\big)_{n\in\NN}$ indexed by $(j_n)=(j_n(t))$ with endpoints $r_{i,j_{n}},s_{i,j_{n}}$ converging to $t$. Thus for each one of the intervals $(r_{i,j_{n}},s_{i,j_{n}})$ we can apply the above argument and find $k=k(n)\in\NN$ and a $k$-tuple $\tau(n)=\big(\tau^{1}(n),\tau^{2}(n),...,\tau^{k}(n)\big)$, such that $\tau^l(n)\in(r_{i,j_{n}},s_{i,j_{n}}),1\leq l\leq k,$ and for every $x\in [\frac{1}{j_{n}(t)},1-\frac{1}{j_{n}(t)}]$ there is $l=l(x,n)\in\{1,\dots,k\}$ satisfying $f^{(i)}(\tau^{l}(n),x)>\frac{2}{3}4^{-i}$. Note also that$f^{(i)}(r_{i,j_n},\cdot)=f^{(i)}(s_{i,j_n},\cdot)\equiv 0$.\\

	Finally, we consider now the sequence $(t_{m})_{m\in\NN}$ obtained by concatenating the $k(n)+2$-tuples $(r_{i,j_{n}}, \tau^{1}(n),\tau^{2}(n),\dots,\tau^{k}(n),s_{i,j_{n}})$ in increasing order in $n$. Fix $x_{0}\in[0,1], n_{0}\in\NN $ and $\eE=\frac{1}{6}4^{-i}$. Since $t\notin K_{h}$ for any  $h<i$ we know that the functions $f^{(h)}(\tau,\cdot)$ are uniformly continuous around $t$ for every $h<i$, i.e there is a $\delta>0$ such that $\sum_{h=1}^{i-1}\big(f^{(h)}(t,\cdot)-f^{(h)}(\tau,\cdot)\big)<\frac{1}{4^{i}}$ for every $\tau$ with $|t-\tau|\leq \delta$. Since $t_{n}$ converges to $t$ there is $n_{1}\in\NN$ such that for every $n\in\NN$ with $n>n_{1}$ we have $|t_{n}-t|<\delta$ and by construction of $t_{n}$ there are $n,m >\max\{n_{0},n_{1}\}$ such that $f^{(i)}(t_{m},x_{0})-f^{(i)}(t_{n},x_{0})>\frac{2}{3}4^{-i}$. For these $n,m$ we have
	\begin{align*}
	  f&(t_{m},x)-f(t_{n},x)=\sum_{h=1}^{\infty}(f^{(h)}(t_{m},x)-f^{(h)}(t_{n},x))\\
	  =&\sum_{h=1}^{i-1}(f^{(h)}(t_{m},x)-f^{(h)}(t_{n},x))+f^{(i)}(t_{m},x)-f^{(i)}(t_{n},x)\\
	  &+\sum_{h=i}^{\infty}(f^{(h)}(t_{m},x)-f^{(h)}(t_{n},x))\\
	  \geq&\frac{2}{3\cdot 4^{i}}- \frac{1}{4\cdot 4^{i}}- \sum_{h=i}^{\infty}\frac{1}{4^{h}}=\frac{2}{3\cdot 4^{i}} -\frac{1}{4\cdot 4^{i}}-\frac{1}{4\cdot 4^{i}}=\eE,
	\end{align*}
	so $\big(f(t_{n},x)\big)_{n\in\NN}$ is not Cauchy.
\end{proof}

\section{Optimality of the conditions in Theorem~\ref{Ex1}}
In this section we show that Theorem~\ref{Ex1} is sharp in two senses. Firstly, the following argument shows that $K$ in Theorem~\ref{Ex1} cannot be non-meager, thus the example is best possible in the sense of Baire category. In particular, we cannot obtain divergence on the whole of $\Ii$.

\begin{proposition}
	Let $f$ be a continuous map of $[0,1]$ into $\mathcal{L}^p(\OO)$.	If $f_t$ is continuous for every $t$, then there is a comeagre subset $T\subset[0,1]$ such that for any $t\in I$ and any sequence $(t_n)_{n\in \NN}$ with limit $t$
	$$\lim_{n\to\infty}f_{t_n}(x)=f_t(x)\;\;\textrm{ for all }x\in\OO.$$
\end{proposition}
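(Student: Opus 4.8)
The plan is to show that, writing $h_x(t):=f_t(x)$, the set of $t\in[0,1]$ at which $h_x$ is continuous for \emph{every} $x\in\OO$ is comeagre; this is precisely the assertion, since $h_x$ being continuous at $t$ means exactly that $f_{t_n}(x)\to f_t(x)$ for every sequence $t_n\to t$. The naive route---each $h_x$ is of Baire class $1$, hence continuous off a meagre set---fails, because $\OO$ is uncountable and a union of uncountably many meagre sets need not be meagre. The device that overcomes this is a mollification that is \emph{uniform in $x$}.

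Fix a smooth mollifier $\rho_\eE$ and, extending each $f_t$ to $\RR$ by reflection across $0$ and $1$, set $F_\eE(t,x):=\int\rho_\eE(x-y)f_t(y)\,\di y$ and write $F_k:=F_{1/k}$. First I would record two facts. (a) Each $F_k$ is jointly continuous on $[0,1]\times\OO$: continuity in $x$ follows from continuity of translation $x\mapsto\rho_\eE(x-\cdot)$ in $\mathcal{L}^q$ (with $q$ the exponent conjugate to $p$) together with the bound $\sup_t\|f_t\|_p<\infty$, which holds because $t\mapsto\|f_t\|_p$ is continuous on the compact $[0,1]$; continuity in $t$ follows from H\"older's inequality and the $\mathcal{L}^p$-continuity of $t\mapsto f_t$. (b) For each fixed $t$, since $f_t\in C(\OO)$ is uniformly continuous on the compact $\OO$, one has $F_k(t,\cdot)\to f_t$ \emph{uniformly} on $\OO$; in particular $F_k(t,x)\to f_t(x)$ pointwise and $(F_k(t,\cdot))_k$ is uniformly Cauchy in $x$.

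The core is a Baire-category argument. For $c>0$ and $m\in\NN$ put
$$E_m^c:=\Big\{t\in[0,1]:\ \sup_{x\in\OO}|F_j(t,x)-F_k(t,x)|\le c\ \text{ for all }j,k\ge m\Big\}.$$
Since $(t,x)\mapsto|F_j(t,x)-F_k(t,x)|$ is continuous and $\OO$ is compact, $t\mapsto\sup_{x}|F_j(t,x)-F_k(t,x)|$ is continuous, so each $E_m^c$ is closed; and the uniform Cauchy property of (b) gives $\bigcup_m E_m^c=[0,1]$. Over an arbitrary subinterval, the Baire category theorem forces some $E_m^c$ to have nonempty interior there, so $U_c:=\bigcup_m\mathrm{int}(E_m^c)$ is dense and open. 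I would then take $T:=\bigcap_{n}U_{1/n}$, which is comeagre.

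It remains to check that $T$ has the required property. Fix $t_0\in T$, $x_0\in\OO$ and $\eE>0$, and choose $n$ with $c:=1/n<\eE/3$. Then $t_0\in\mathrm{int}(E_m^c)$ for some $m$, so on an open interval $J\ni t_0$ one has $|F_j(t,x)-F_k(t,x)|\le c$ for all $t\in J$, $x\in\OO$ and $j,k\ge m$; letting $k\to\infty$ and using (b) yields $|F_m(t,x)-f_t(x)|\le c$ for all $(t,x)\in J\times\OO$. Since $F_m$ is continuous in $t$ at $t_0$, there is $\delta>0$ with $(t_0-\delta,t_0+\delta)\subset J$ and $|F_m(t,x_0)-F_m(t_0,x_0)|<c$ for $|t-t_0|<\delta$, and the triangle inequality gives $|f_t(x_0)-f_{t_0}(x_0)|\le 3c<\eE$. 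As $x_0$ is arbitrary, $h_{x_0}$ is continuous at $t_0$ for every $x_0$, which is the claim. I expect the only genuine obstacle to be the passage from the fixed-$x$ Baire-class-$1$ statement to a single comeagre set valid for all $x$ simultaneously; this is resolved entirely by upgrading the pointwise convergence $F_k(t,\cdot)\to f_t$ to the uniform-in-$x$ convergence of (b), which is what compactness of $\OO$ and continuity of $f_t$ supply and what drives the uniform quantity $\sup_x|F_j-F_k|$ in the category argument.
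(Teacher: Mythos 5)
Your proof is correct, but it follows a genuinely different route from the paper's. You mollify to produce jointly continuous approximants $F_k$ which, for each fixed $t$, converge \emph{uniformly in $x$} to $f_t$ (this is exactly where the continuity of $f_t$ and compactness of $\OO$ enter), so that $t\mapsto f_t$ becomes a Baire-class-$1$ map into $(C(\OO),\|\cdot\|_\infty)$; the closed sets $E_m^c$ and the sets $U_c=\bigcup_m\mathrm{int}(E_m^c)$ then implement the standard theorem that such a map is continuous on a comeagre set. The paper instead works directly with the oscillation sets
$T_{pq}=\{t:\exists x,\ f_t(x)<q<p<\limsup_{s\to t}f_s(x)\}$ (and their $\liminf$ counterparts $S_{pq}$), indexed by rational $p,q$, and shows each is nowhere dense by a nested-ball contradiction: if $T_{pq}$ were dense in a ball, $\mathcal{L}^p$-continuity would propagate an oscillation of size $p-q$ on arbitrarily small spatial scales down to a limit point $t_\infty$, contradicting continuity of $f_{t_\infty}$. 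Your argument buys a cleaner, more modular proof and in fact a strictly stronger conclusion --- on your $T$ one gets $\|f_{t_n}-f_t\|_\infty\to 0$, i.e.\ uniform rather than merely pointwise convergence --- at the cost of the mollification machinery (reflection extension, conjugate-exponent estimates, joint continuity of $F_k$), all of which you correctly identify and which does go through since $\sup_t\|f_t\|_p<\infty$ on the compact index set. The paper's argument is more elementary and self-contained but extracts only the pointwise statement. One cosmetic point: the proposition's ``for any $t\in I$'' should read ``for any $t\in T$'', and you have implicitly (and correctly) read it that way.
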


\begin{proof}
Define for $0<q<p$ the sets
\begin{equation*}
T_{pq}=\big\{t\in[0,1]: \hh \exists \hh x(t)\in\OO \hh\text{with} \hh f_t(x(t))<q<p<\limsup_{s\rightarrow t} f_s(x(t))\big\}.
\end{equation*}
We first want to prove by contradiction that $T_{pq}$ are nowhere dense sets. Let us assume that there are $q<p$ such that $T_{pq}$ is dense in an open ball $B(t_{0},r_{0})$, with $t_0\in T_{pq}$. We then have that no open subset $S$ of the ball $B(t_{0},r_{0})$ is disjoint from $T_{pq}$.\\

We start by demonstrating that for any such $S$ and any choice of $\delta>0$ and sufficiently small $\rho>0$, there exist $t\in S$ and $r<\rho$ such that for every $s\in B(t,r)$ we have $\omega(s,\delta)>q-p,$ where $\omega(s,\delta)=\sup\{|f_s(x)-f_s(y)|:|x-y|<\delta\}.$\\

By assumption, there is ${t_1}\in S\cap T_{pq}$, hence there is a point $x({t_1})\in\OO$ with $f_{{t_1}}(x({t_1}))<q<p<\limsup_{s\rightarrow {t_1}}f_s(x({t_1}))$. Since $f_{t_1}(x({t_1}))<q$, there exists $0<\delta_1<\delta$ such that $f_{t_1}(y)<q$, for all $y\in B(x({t_1}),\delta_1)$, by continuity of $f_{t_1}$. Moreover, $f$ is $\mathcal{L}^p$-continuous, hence there is $r_1>0$ such that for every $s\in B(t_1,r_1)$, there exists $x^u(s)\in B(x(t_1),\delta_1)$ for which $f_s(x^u(s))<q$. We choose now a second point $t_2\in B(t_1,r_1)$ with $f_{t_2}(x(t_1))>p$ and by continuity of $f_{t_2}$ we can fix $\delta_2>0$ such that $f_{t_2}(y)>p$ for all $y\in B(x(t_1),\delta_2)$. Using $\mathcal{L}^p$-continuity again, we can find $r_2>0$ such that for every $s\in B(t_2,r_2)$ there exists $x^l(s)\in B(x(t_2),\delta_2)$ with $f_s(x^l(s))>p.$ The above assertion now holds for the choices $t=t_2$, $r=\min\{r_1,r_2,\sup\{|t_2-s|, s\in \partial B(t_1,r_1)\}\}$ and $\delta=\delta_1.$\\

Applying the above construction to vanishing sequences $(\rho_n)_{n\in\NN},(\delta_n)_{n \in \NN}$, we can find points $t_n$ and radii $r_n<\rho_n$ with $B(t_{n+1},r_{n+1})\subset B(t_{n},r_{n})$ and $\omega(s,\delta_{n})>q-p$ for every $s\in B(t_{n},r_{n})$. Since $\lim_{n\to\infty} r_{n}=0$, we have that  $\lim_{n\to\infty}t_{n}=t_{\infty}$ for some $t_{\infty}\in [0,1]$. Moreover, we have that $\omega(t_{\infty},\delta_{n})>p-q$ for every $n\in \NN$, which contradicts the assumption that $f_t$ is continuous for every $t\in[0,1]$, hence our initial assumption that $T_{pq}$ is not nowhere dense cannot be true.\\

We can apply the same argument to the sets \begin{equation*}
S_{pq}=\big\{t: \hh \exists \hh x(t)\in\OO \hh \text{such that} \hh f_t(x(t))>q>p>\liminf_{s\rightarrow t} f_s(x(t))\big\},
\end{equation*}
and the comeager set $T$ mentioned in the theorem is the complement of $$\bigcup_{p,q\in\mathbb{Q}} (T_{pq}\cup S_{pq}).$$ 

\end{proof}

Secondly, we can prove that we cannot make the regularity requirement (i) in Theorem~\ref{Ex1} stronger. 
\begin{proposition}\label{SmoothSharp}
Let $f$	be a continuous mapping of $[0,1]$ into $\mathcal{L}^p(\Omega)\cap \mathcal{W}^{1,q}(\Omega),$ where $1\leq p<\infty$ and $q>1$. Then there is an open dense set $T\subset[0,1]$ such that for all $t\in T$ and any sequence $(t_n)_{n\in \NN}$ with limit $t$, $$\lim_{n\to\infty}f_{t_n}(x)=f_t(x)\;\textrm{ for all }x\in\Omega.$$
\end{proposition}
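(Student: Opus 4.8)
The plan is to exploit the extra Sobolev regularity, which—crucially—upgrades pointwise control via an embedding. For $q>1$ and $\OO\subset\RR$ one-dimensional, $\Ww^{1,q}(\OO)$ embeds continuously into the H\"older space $C^{0,1-1/q}(\OO)$, so there is a constant $C$ with
\begin{equation}\label{eq:sobolev_embed}
|g(x)-g(y)|\leq C\,\|g\|_{\Ww^{1,q}}\,|x-y|^{1-1/q}\quad\text{for all }x,y\in\OO,\ g\in\Ww^{1,q}(\OO).
\end{equation}
The key consequence is that the modulus of continuity $\oO(s,\dD)$ from the previous proof is now controlled \emph{uniformly in space} by the single quantity $\|f_s\|_{\Ww^{1,q}}$. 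First I would localize: let $T$ be the set of $t$ at which the map $s\mapsto\|f_s\|_{\Ww^{1,q}}$ is locally bounded, i.e.\ $t\in T$ iff there is a neighbourhood of $t$ on which this norm stays below some finite bound. I claim $T$ is exactly the open dense set we seek.

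That $T$ is open is immediate from its definition. For density, I would argue by contradiction using a Baire-category argument on the closed sets $E_N=\{t:\|f_t\|_{\Ww^{1,q}}\leq N\}$. Since $f$ maps into $\Ww^{1,q}$, we have $[0,1]=\bigcup_{N}E_N$; the only missing ingredient is that each $E_N$ is closed, which follows from lower semicontinuity of the $\Ww^{1,q}$-norm along $\Ll^p$-convergent sequences. Indeed, if $t_k\to t$ with $f_{t_k}\to f_t$ in $\Ll^p$ and $\|f_{t_k}\|_{\Ww^{1,q}}\leq N$, then the derivatives are bounded in $\Ll^q$, so a subsequence of $f'_{t_k}$ converges weakly in $\Ll^q$ (here $q>1$ is essential for reflexivity), forcing $f_t\in\Ww^{1,q}$ with $\|f_t\|_{\Ww^{1,q}}\leq N$ by weak lower semicontinuity of the norm. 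By Baire, some $E_N$ contains an interval, and the same reasoning applied to any subinterval shows that every open set meets the interior of some $E_N$; hence $T=\bigcup_N\mathrm{int}(E_N)$ is dense.

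Finally I would establish pointwise convergence on $T$. Fix $t\in T$, a bound $\|f_s\|_{\Ww^{1,q}}\leq M$ on a neighbourhood $U$ of $t$, a sequence $t_n\to t$, and a point $x\in\OO$. By $\Ll^p$-continuity, $f_{t_n}\to f_t$ in $\Ll^p$, so a subsequence converges to $f_t$ for $\lL$-a.e.\ $y$; pick such a $y$ with $|x-y|$ as small as desired. Then, for that subsequence,
\begin{equation*}
|f_{t_n}(x)-f_t(x)|\leq|f_{t_n}(x)-f_{t_n}(y)|+|f_{t_n}(y)-f_t(y)|+|f_t(y)-f_t(x)|,
\end{equation*}
where the outer two terms are each at most $CM|x-y|^{1-1/q}$ by \eqref{eq:sobolev_embed} and the middle term tends to $0$. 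Since $|x-y|$ can be taken arbitrarily small, the subsequential limit at $x$ equals $f_t(x)$; as this holds for every subsequence, the full sequence converges, giving $f_{t_n}(x)\to f_t(x)$ for all $x$. The main obstacle I anticipate is the closedness of the $E_N$: one must genuinely use the reflexivity of $\Ll^q$ for $q>1$ to extract a weakly convergent subsequence of derivatives and identify the weak limit as $f'_t$, and it is precisely this step that fails for $q=1$, matching the paper's assertion that condition (i) of Theorem~\ref{Ex1}, which only requires absolute continuity, cannot be strengthened.
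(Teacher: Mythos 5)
Your proof is correct, and its overall architecture matches the paper's: both arguments localize to an open dense set on which $t\mapsto\|f_t\|_{\mathcal{W}^{1,q}}$ is locally bounded, and then upgrade $\mathcal{L}^p$-convergence to pointwise convergence via the Sobolev--H\"older embedding. The paper isolates that second step as Lemma~\ref{lem1}, proved by integrating the three-point inequality over a small ball around $x$ rather than by extracting an a.e.-convergent subsequence as you do; both devices work. Where you genuinely diverge is in how the open dense set is produced. The paper expands each $f_t$ in a Fourier series, argues that the partial-sum norms $g^{(m)}(t)=\|f^{(m)}(t,\cdot)\|_{1,q}$ are continuous in $t$, concludes that $g(t)=\|f_t\|_{1,q}$ is a pointwise limit of continuous functions and hence has a comeager $G_\delta$ set of continuity points, and deduces local boundedness on an open dense set. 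You instead apply the Baire category theorem directly to the sublevel sets $E_N=\{t:\|f_t\|_{1,q}\leq N\}$, obtaining their closedness from weak compactness in the reflexive space $\mathcal{L}^q$ together with lower semicontinuity of the norm under weak convergence. Your route is more self-contained, avoids the trigonometric-series machinery, and makes the role of the hypothesis $q>1$ completely transparent (it is exactly what gives reflexivity and hence closedness of $E_N$); the paper's route yields the marginally stronger information that the good set of $t$ is comeager rather than merely open and dense. Two small points you should make explicit: the lower semicontinuity of the full $\mathcal{W}^{1,q}$-norm also requires the zeroth-order term $\|f_t\|_q\leq\liminf_k\|f_{t_k}\|_q$, which follows from Fatou's lemma along an a.e.-convergent subsequence of $f_{t_k}$ (or from weak $\mathcal{L}^q$-convergence of $f_{t_k}$ itself); and the identification of the weak $\mathcal{L}^q$-limit of $f'_{t_k}$ with $f'_t$ should be carried out by passing to the limit in the integration-by-parts identity against test functions in $C_c^\infty(\Omega)$, using the $\mathcal{L}^p$-convergence $f_{t_k}\to f_t$ on the other side. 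Both are routine, and with them your argument is complete.
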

For the proof of Proposition~\ref{SmoothSharp}, we need to establish an auxiliary lemma about the relation between $\mathcal{L}^p$-continuity and pointwise continuity.
\begin{lemma}\label{lem1}
	Let $f$ be $\mathcal{L}^p$-continuous and $S\subset[0,1]$ an open interval. If $f_t\in \mathcal{W}^{1,q}(\Omega)$ for some $q>1$ and $\{f_t;t\in S\}$ is bounded in $\mathcal{W}^{1,q}(\Omega)$, then $f$ is pointwise continuous for every $t\in S,$ i.e. $\lim_{n\to\infty}f_{t_n}(x)=f_t(x)$ for every sequence $(t_n)_{n\in\NN}$ converging to $t$ and every $x\in\OO$.
\end{lemma}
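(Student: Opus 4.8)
The plan is to exploit the compact embedding of $\mathcal{W}^{1,q}(\OO)$ into $C(\OO)$ that holds precisely because $q>1$. Concretely, the Morrey/Sobolev inequality gives, for any absolutely continuous $g$ with $g'\in\mathcal{L}^q$ and $q>1$, a uniform H\"older bound
\begin{equation}\label{eq:morrey}
|g(x)-g(y)|\leq \|g'\|_q\,|x-y|^{1-\frac1q}\qquad\text{for all }x,y\in\OO.
\end{equation}
Since $\{f_t:t\in S\}$ is bounded in $\mathcal{W}^{1,q}$, say $\|f_t'\|_q\leq M$ for all $t\in S$, the family is \emph{equicontinuous}: all the functions $f_t$, $t\in S$, share the single modulus of continuity $x\mapsto M|x|^{1-1/q}$. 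This equicontinuity is the crucial ingredient, and obtaining it is the only place where the hypothesis $q>1$ is used; for $q=1$ the Sobolev embedding fails to land in $C(\OO)$ and no uniform modulus is available.

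Next I would upgrade $\mathcal{L}^p$-convergence to pointwise convergence using this equicontinuity. Fix $t\in S$ and a sequence $t_n\to t$, which we may assume lies in $S$. By $\mathcal{L}^p$-continuity, $f_{t_n}\to f_t$ in $\mathcal{L}^p$, so a subsequence converges $\lambda$-a.e.; combined with the uniform modulus \eqref{eq:morrey} this forces uniform convergence of that subsequence on $\OO$. The standard subsequence argument then promotes this to convergence of the full sequence: if $f_{t_n}(x_0)\not\to f_t(x_0)$ for some $x_0$, one extracts a subsequence bounded away from $f_t(x_0)$ at $x_0$, applies the a.e.-convergent-subsequence plus equicontinuity argument to \emph{that} subsequence, and derives a contradiction with the forced uniform convergence. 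Hence $f_{t_n}(x)\to f_t(x)$ for every $x\in\OO$.

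To make the a.e.-to-uniform step precise, I would argue directly: given $\eE>0$, choose $\delta>0$ with $M\delta^{1-1/q}<\eE/3$, and cover $\OO$ by finitely many intervals of length $\delta$ with centers $x_1,\dots,x_N$. On a set of full measure the subsequence converges at enough points inside each such interval to pin down the values at the $x_k$, and the uniform modulus controls the oscillation on each interval by $\eE/3$; a three-$\eE/3$ estimate then yields $\sup_{x\in\OO}|f_{t_{n_k}}(x)-f_t(x)|<\eE$ for large $k$. The main obstacle is purely bookkeeping: ensuring that the a.e.-convergence furnishes control at sufficiently many points in every subinterval simultaneously, which is handled by noting that the convergence set has full measure and hence meets every nondegenerate interval densely, so one may select within each interval of the cover a point at which the subsequence converges.
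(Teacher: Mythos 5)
Your proof is correct, and it shares with the paper the decisive first step: the boundedness of $\{f_t;t\in S\}$ in $\mathcal{W}^{1,q}$ with $q>1$ yields, via Morrey/H\"older, a single H\"older modulus $M|x-y|^{1-1/q}$ for the whole family, i.e.\ equicontinuity. (Your direct derivation $|g(x)-g(y)|=|\int_y^x g'|\leq\|g'\|_q|x-y|^{1-1/q}$ is if anything cleaner than the paper's appeal to the Sobolev Imbedding Theorem, given that the paper defines $\mathcal{W}^{1,q}$ via absolute continuity.) Where you diverge is in how this equicontinuity is combined with $\mathcal{L}^p$-continuity. The paper argues quantitatively and without subsequences: it writes the three-term triangle inequality $|f_t(x)-f_s(x)|\leq 2C|x-y|^{q'}+|f_t(y)-f_s(y)|$, integrates in $y$ over a small ball $B(x,\eta/2)$, and obtains the explicit bound $|f_t(x)-f_s(x)|\leq\varepsilon/2+\eta^{-1/p}\|f_t-f_s\|_p$, from which pointwise (indeed uniform in $x$) continuity follows immediately. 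You instead run a soft compactness argument: extract an a.e.-convergent subsequence from the $\mathcal{L}^p$-convergent sequence, upgrade to uniform convergence via the shared modulus and a finite $\delta$-cover, and then invoke the standard subsequence principle to recover convergence of the full sequence. Both routes are valid; the paper's buys an explicit modulus of continuity of $t\mapsto f_t(x)$ in terms of $\|f_t-f_s\|_p$, while yours is the more recognizable Arzel\`a--Ascoli-type argument and avoids the averaging trick. Two small points to keep straight in your write-up: since the paper does not identify a.e.\ equal functions, the a.e.\ limit of the subsequence is only a.e.\ equal to $f_t$, which is harmless because you only evaluate it at points of a full-measure set before passing to the sup via the modulus; and the reduction to $t_n\in S$ is justified because $S$ is open, so only finitely many terms can lie outside it.
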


\begin{proof}
	Fix $\varepsilon>0$. Since $f_t\in \mathcal{W}^{1,q}(\Omega)$, invoking the Sobolev Imbedding Theorem (see, e.g., \cite[Part II of Theorem 4.12 with $m=n=1,j=0,n=1,p=q$ and $\lambda=1-1/q$]{Ad03}), we can assume that $f_t$ is H\"{o}lder-continuous with exponent $q'=1-\frac{1}{q}$ and constant $C_t>0$ independent of $t$, i.e. we have for all $t \in S$,
	\begin{equation} \label{HoldCont1}
		|f_t(x)-f_t(y)|\leq \, C_t|x-y|^{q'},\hspace{5pt} \;\textrm{ for all }x,y \in\OO.
	\end{equation}
	The proof of this part of the Sobolev Imbedding Theorem (see, e.g., \cite[p. 100, proof of Lemma 4.28]{Ad03}) demonstrates that the H\"older constant $C_t$ is bounded by a constant multiple of $\|f_t\|_{1,q}$, using the boundedness of $\{f_t;t\in S\}$ we can therefore assume that \eqref{HoldCont1} holds uniformly on $S$ with $C_t\equiv C$. Now, fixing $x\in\OO$ and any $s,t\in S$ and then applying the triangle inequality and \eqref{HoldCont1}, we obtain, for all $y \in \OO$,
	\begin{align*}
		|f_t(x)-f_s(x)|\leq&|f_t(x)-f_t(y)|+|f_t(y)-f_s(y)|+|f_s(x)-f_s(y)|\\
		\leq&2C|x-y|^{q'}+|f_t(y)-f_s(y)|.
	\end{align*}
	Integrating both sides in $y$ on the interval $B(x,\frac{\eta}{2})=(x-\frac{\eta}{2},x-\frac{\eta}{2})$, where $0<\eta<\min\{\frac{\varepsilon}{2},2\sqrt[q']{\frac{\varepsilon}{4C}}\}$, yields
	\begin{align*}
	\Kl\int_{B(x,\frac{\eta}{2})}|f_t(x)-f_s(x)|^{p}\di y\Kr^{\frac{1}{p}}\leq& \Kl\int_{B(x,\frac{\eta}{2})}(2C|x-y|^{q'})^{p}\di y\Kr^{\frac{1}{p}}\\ &+ \Kl\int_{B(x,\frac{\eta}{2})}|f_t(y)-f_s(y)|^{p}\di y\Kr^{\frac{1}{p}}
	\end{align*} and thus
	\begin{equation*}
	\eta^{\frac{1}{p}} |f_t(x)-f_s(x)| \leq \eta^{\frac{1}{p}}\varepsilon + \|f_t-f_s\|_{p}.
	\end{equation*}
	This implies $$
	|f_t(x)-f_s(x)|\leq \frac{\varepsilon}{2} + \frac{\|f_t-f_s\|_{p}}{\eta^{\frac{1}{p}}}$$
	and using $\mathcal{L}^{p}$ continuity of $f$ we derive that
	\begin{equation*}
	|f_t(x)-f_s(x)|\leq \varepsilon,
	\end{equation*}
	for all $s$ sufficiently close to $t$.
\end{proof}

We are now ready to prove Proposition~\ref{SmoothSharp}.
\begin{proof}[Proof of Proposition~\ref{SmoothSharp}]
Let $f_t\in \mathcal{W}^{1,q}(\OO),q>1$ for every $t\in[0,1]$. Since $f_t$ is absolutely integrable for all $t$, we can expand each $f_t$ into a Fourier series
$$ f^{(m)}(t,x)=\sum_{n=-m}^{n=m}c_{n}(t)e^{in\pi(x-\frac{1}{2})}$$ for which we have $ \lim_{m\to\infty}f^{(m)}(t,\cdot)=f_t$, w.r.t. $\|\cdot\|_{1,q}$, see e.g. \cite[p. 78]{Ba64}. Using $\mathcal{L}^{p}$-continuity of $f$ we obtain that the coefficients $c_{n}(t)=\int_\OO f_t\exp\big(-in\pi(\cdot-\frac{1}{2})\big)\di \lambda$ are continuous in $t$ and furthermore $g^{(m)}(t)=\|f^{(m)}(t,\cdot)\|_{1,q}$ is a continuous function. Hence $g(t)=\|f_t\|_{1,q}$ can be represented as a limit of continuous functions and therefore the set of points of continuity of $g$ is comeager $G_\delta$, see e.g. \cite[Theorem 7.3]{Ox71}. This implies that $g(t)$ is locally bounded on an open dense set. Thus the assumptions of Lemma~\ref{lem1} are satisfied and its application yields the statement of the theorem. 
\end{proof}

\section{Construction of the second example}
If the requirement~(i) in Theorem~\ref{Ex1} is dropped, then it is possible to create an example where we not only have everywhere pointwise divergence, but also divergence is obtained on every subset of $\mathcal{I}$ with positive measure.
 
\begin{theorem}\label{Ex2}
 	There exists a continuous function $f:[0,1]\longrightarrow \mathcal{L}^{p}(\OO)$, such that for all measurable sets $T \subset [0,1]$ with $\mu(T)>0$ and every $t\in T$ with Lebesgue density one, there exists a sequence $(t_{n})_{n\in\NN}\subset T$ with $\lim_{n\to\infty}t_{n}=t$ and $\lambda (A_{t})=1$, where $A_{t}=\{x: \lim_{n\to\infty}f_{t_n}(x) \neq f_t(x)\}$
\end{theorem}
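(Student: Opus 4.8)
The plan is to build $f$ as a superposition $f=\sum_{k\in\NN} f^{(k)}$ of ``typewriter fields'' at successively finer scales, echoing the construction of Theorem~\ref{Ex1} but arranging the divergence structure to accumulate at \emph{every} point of $\Ii$ rather than only along a prescribed meagre set. At scale $k$ I fix a spatial width $a_k$ (with $1/a_k\in\NN$) and a temporal sweep-period $\pi_k\to 0$, and let $f^{(k)}(t,\cdot)$ be a single tapered bump of height $h_k$ and spatial width $a_k$ whose location advances through the $1/a_k$ consecutive subintervals of $\OO$ as $t$ runs through one period, then repeats periodically in $t$; as with $\varphi_i$ in Theorem~\ref{Ex1}, each bump ramps continuously from and back to $0$ at the endpoints of its time-interval, so $f^{(k)}$ is $\Ll^p$-continuous in $t$. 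I choose the parameters so that $\sum_k h_k\,a_k^{1/p}<\infty$ (e.g.\ $a_k=2^{-k}$, $h_k=\sqrt{k}$, $\pi_k=2^{-k}$); then $\|f^{(k)}(t,\cdot)\|_p\le h_k a_k^{1/p}$, so $f_t\in\Ll^p(\OO)$ with $\|f_t\|_p$ bounded uniformly in $t$ and $f_t(x)<\infty$ for $\lambda$-a.e.\ $x$. The $\Ll^p$-continuity of $f$ then follows exactly as in Theorem~\ref{Ex1}: the finitely many leading terms are continuous in $t$ and the tail is dominated by $2\sum_{k>l}h_k a_k^{1/p}$, uniformly in $t$.

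The point of letting $h_k\to\infty$ (while keeping $\sum_k h_k a_k^{1/p}<\infty$) is that it makes the comparison with $f_t$ trivial. Rather than trying to control $f_{t_n}$ from below against $f_t$, I will force $\limsup_n f_{t_n}(x)=+\infty$ for $\lambda$-a.e.\ $x$; since $f_t(x)<\infty$ a.e., this already yields $\lim_n f_{t_n}(x)\neq f_t(x)$, i.e.\ $x\in A_t$. This sidesteps the delicate issue that the ``background'' $\sum_{k\neq k_n}f^{(k)}$ need not converge pointwise along $t_n$.

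The heart of the argument is the extraction of the sequence. Fix $T$ with $\mu(T)>0$ and a density point $t\in T$. I pick radii $\rho_n\downarrow 0$ and, for each $n$, a scale $k_n$ so large that $\pi_{k_n}\le\rho_n$; then inside the window $W_n=(t-\rho_n,t+\rho_n)$ the scale-$k_n$ bump performs at least one complete spatial sweep. For each of the $1/a_{k_n}$ spatial locations, the set of times in $W_n$ at which the fully ramped-up bump sits over that location has measure comparable to $\rho_n a_{k_n}$. Since $t$ is a density point, $\mu(W_n\setminus T)=o(\rho_n)$; as a given location can be missed only if $T$ avoids \emph{all} of its (measure $\approx\rho_n a_{k_n}$) bump-on times, $T$ can fail to meet the bump-on times of spatial locations of total $\lambda$-measure at most $\approx\mu(W_n\setminus T)/\rho_n=o(1)$. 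Hence I may select a finite batch $B_n\subset T\cap W_n$ of bump-on times whose bumps cover $\OO$ up to a set of $\lambda$-measure $\eE_n$, and, passing to a subsequence of $(\rho_n)$ if necessary, $\sum_n\eE_n<\infty$. Concatenating $B_1,B_2,\dots$ gives a sequence $(t_m)\subset T$ with $t_m\to t$. By the Borel--Cantelli lemma applied to the exceptional sets $\{x:\ x\text{ is not covered in batch }n\}$, $\lambda$-a.e.\ $x$ is covered in all but finitely many batches; for each such $x$ and each covering index $m$ we have $f_{t_m}(x)\ge h_{k_m}$, so $\limsup_m f_{t_m}(x)=+\infty$, and therefore $\lambda(A_t)=1$.

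The main obstacle is exactly this extraction step: one needs a \emph{single} sequence that witnesses divergence for a.e.\ $x$ \emph{simultaneously} while remaining inside the arbitrary positive-measure set $T$. The two ingredients that make it work are (i) the Lebesgue density hypothesis, which lets $T$ capture all but an $o(\rho_n)$-fraction of the bump-on times in each window, hence almost every spatial location, and (ii) the summability of the covering defects $\eE_n$, which upgrades ``a.e.\ $x$ covered in each batch'' to ``a.e.\ $x$ covered infinitely often''. Balancing the spatial width $a_k$, the sweep-period $\pi_k$, the measure of bump-on times, and the density deficit of $T$ is the only genuinely delicate point; the per-scale $\Ll^p$-continuity and the tail estimate are routine and parallel Theorem~\ref{Ex1}.
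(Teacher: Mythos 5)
Your proposal is correct and follows the same overall strategy as the paper's proof: a superposition of spatial sweeps at all temporal scales with heights tending to infinity (so that $\limsup_n f_{t_n}(x)=+\infty$ together with $f_t(x)<\infty$ a.e.\ makes the comparison with $f_t$ automatic), a Lebesgue-density argument showing that $T$ meets the ``bump-on'' times of almost every spatial location, and extraction of finite batches whose covering defects are small enough to force a.e.\ infinitely-often coverage. The implementations differ in two respects worth noting. First, the paper anchors its sweeps to a countable dense set $\{q_m\}$ and to interval sequences $S_{m,k}$ accumulating at each $q_m$ from the left, whereas you make each scale periodic in $t$ with period $\pi_k\to 0$, so that every window around every $t$ automatically contains full sweeps; this removes the bookkeeping in \eqref{rhocondb} needed to locate a suitable $q_{m_i}$ and a block $S_{m_i,k_i}$ of high relative density. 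Second, your typewriter is spatially discrete, so extracting the batch $B_n$ reduces to a pigeonhole count --- the fully-on times of distinct locations are disjoint subsets of $W_n$, hence the locations missed by $T$ have total spatial measure at most $\mu(W_n\setminus T)/(c\rho_n)=o(1)$ --- and the conclusion follows from Borel--Cantelli; the paper's continuously moving window instead requires the measure identity \eqref{final1} plus inner regularity and compactness to pass to a finite subfamily, and then a reverse-Fatou argument on the sets of covered points. Your route is more elementary at the extraction step. The only points you should make quantitative are the choice $\pi_{k_n}\le\rho_n/2$, which guarantees that $W_n$ contains enough full periods so that each location's fully-on time has measure at least $c\rho_n a_{k_n}$, and the choice of $\rho_n$ so small that the density deficit yields $\eE_n\le 2^{-n}$, which makes the Borel--Cantelli hypothesis explicit; with those two lines added the argument is complete.
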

\begin{proof}
Let $\{q_{m},m\in\NN\}\subset{\mathcal{I}}\setminus\{0\}$ be dense and assign to each $q_{m}$ a sequence $(s_{m,k})_{k\in\NN}$ defined by
\begin{equation*}
s_{m,k}=q_{m} - \frac{1}{k+r(m)}, \hspace{7pt} \text{where} \hspace{7pt} r(m)=\min\Big\{r:q_{m}-\frac{1}{r}\geq 0\Big\}.
\end{equation*}
Setting $S_{m,k}=[s_{m,k},s_{m,k+1}]$, we note that the vanishing intervals $\{S_{m,k}\}_{k\in\NN}$ partition $[0,q_m]$. To partition the spacial domain, set $$b_{m,k}(t)=\max\Big\{0,\frac{t-s_{m,k}}{s_{m,k+1}-s_{m,k}}-\frac{s_{m,k+1}-t}{4^{k+m}}\Big\}$$ and $$c_{m,k}(t)=\min\Big\{1,\frac{t-s_{m,k}}{s_{m,k+1}-s_{m,k}}+\frac{t-s_{m,k}}{4^{k+m}}\Big\},$$
assigning to every $S_{m,k}$ (possibly empty) intervals $I_{m,k}(t)=[b_{m,k}(t),c_{m,k}(t)]$ of maximal length $\mu(S_{m,k})\times 4^{-(k+m)}$ which emerge, move through $\OO$ at linear speed $\mu(S_{m,k})^{-1}$ and vanish as $t\in\mathcal{I}$ varies. Denoting by $\chi_A$ the characteristic function of a set $A$, we define functions $f^{(m,k)}(t,\cdot):\OO\longrightarrow \mathcal{L}^{p}(\OO)$ by
\begin{equation*}
f^{(m,k)}(t,x)=2^{m}\chi_{S_{m,k}}(t) \chi_{I_{m,k}(t)}(x).
\end{equation*}
These functions satisfy $\|f^{(m,k)}(t,\cdot)\|_{p}\leq\frac{1}{2^{m+k}}$ for all $t \in\mathcal{I}$ and one also checks easily that $f^{(m,k)}(t,\cdot)$ is $\mathcal{L}_{p}$-continuous in the first variable for all $t \in \mathcal{I}$. We can now set 
\begin{equation*}
f_t(x)=\sum_{k,m=1}^{\infty}f^{(m,k)}(t,x)
\end{equation*}
and the limit $f$ is well defined in $\mathcal{L}^p$, since $f^{(m,k)}(t,x)\geq 0$ and  $$\sum_{m,k=1}^{\infty}\|f^{(m,k)}(t,\cdot)\|_{p}<\infty.$$

Let $T\subset{I}$ be of positive measure $\lambda(T)>0$, and let $t\in T$ have density $1$ with respect to $T$. We inductively construct a sequence $(t_{n})_{n\in\NN}\subset T$ with $\lim_{n\to\infty}t_{n}= t$ such that $$\limsup_{n\to\infty} f(t_{n},x)=\infty \neq f(t,x),\textrm{ for almost all }x \in [0,1].$$ To this end, let $(\gamma_{i})_{i\in\NN}\subset(0,1)$ be strictly increasing with limit $1$ and initiate the construction at stage $i=0$ with arbitrary $t_0\in T$ and $n_0=m_0=0$. Assume now, we have completed stages $0,\dots, i-1$ of the construction, i.e. we have chosen the initial members of the sequence $t_{0},\dots,t_{n_{1}},\dots,t_{n_{2}},\dots,t_{n_{i-1}}.$
Since $t$ is a point of density $1$ in $T$, we can fix $\rho_{i}\in(0,t)\setminus\{\frac{1}{l};l\in\NN\}$ such that for every $\rho <\rho_{i}$,
\begin{equation}\label{rhocond} \mu\left(T\cap \left(t-\rho , t\right]\right)\geq \gamma_{i}\rho.\end{equation}
Let now $l\in \mathbb{N}$ be the unique integer with $\frac{1}{l+1}<\rho_{i}<\frac{1}{l}$ and choose $m_i>m_{i-1}$ such that 
\begin{equation}\label{rhocondb} t -\rho_{i} +\frac{1}{l+1} < q_{m_{i}}<t  \text{ and } t-q_{m_{i}}<\frac{\gamma_{i}}{l+1}.\end{equation}
By \eqref{rhocond} and the first inequality of \eqref{rhocondb}, we have 
\begin{align*}\mu\left( T\cap\left[q_{m_{i}}-\frac{1}{l+1},q_{m_{i}}\right]\right)&\geq \mu\left(T\cap \left[q_{m_{i}}-\frac{1}{l+1},t\right]\right)-(t-q_{m_{i}})\\&\geq \gamma_{i}(t-q_{m_{i}}+\frac{1}{l+1})-(t-q_{m_{i}}) ,\end{align*} and by the second inequality of \eqref{rhocondb}
we get
$$\mu\left( T\cap\left[q_{m}-\frac{1}{l+1},q_{m}\right]\right)\geq \frac{\gamma^{2}_{i}}{l+1}.$$
Since the intervals $\{S_{m_i,k};k\in\NN\}$ partition $[q_{m_i}-\frac{1}{l+1},q_{m_i}]$, there must be $k\in\NN$ such that $\mu(S_{m_i,k}\cap T) \geq (s_{m_i,k+1}-s_{m_i,k})\gamma^{2}_{i}$. We denote the index of this interval by $k_i$.\\

For $z>0$ and $J\subset\OO$, we write $zJ=\{za;a\in I\}$ and let also $\mathrm{int}J$ denote the interior of a set $J$. Note that if $r\in \mathrm{int}S_{m,k}$, then $r-s_{m,k}\in(s_{m,k+1}-s_{m,k})\mathrm{int}I_{m,k}(r)$. Using this fact and the scale and translation invariance of Lebesgue measure, we obtain
\begin{equation}\label{final1}\begin{aligned}
&\lambda\left(\bigcup_{r\in S_{m_i,k_i}\cap T}\mathrm{int}I_{m_i,k_i}(r)\right)\\
&=\frac{\lambda\left(\bigcup_{r\in T\cap\mathrm{int}S_{m_i,k_i}}(s_{m_i,k_i+1}-s_{m_i,k_i})\mathrm{int}I_{m_i,k_i}(r)\right)}{(s_{m_i,k_i+1}-s_{m_i,k_i})}\\
&\geq \frac{\lambda\left(\bigcup_{r\in T\cap\mathrm{int}S_{m_i,k_i}}\{r-s_{m,k}\}\right)}{s_{m_i,k_i+1}-s_{m_i,k_i}}=\frac{\lambda\left(\bigcup_{r\in (S_{m_i,k_i}\cap T)}\{r\}\right)}{s_{m_i,k_i+1}-s_{m_i,k_i}}\\
&=\frac{\mu(S_{m_i,k_i}\cap T)}{s_{m_i,k_i+1}-s_{m_i,k_i}}\geq \gamma^{2}_{i}.
\end{aligned}\end{equation}
Since Lebesgue measure is inner regular, we can thus find a compact set $K\subset\bigcup_{r\in S_{m_i,k_i}\cap T}\mathrm{int}I_{m_i,k_i}(r)$ with \begin{equation*}\lambda(K)\geq \gamma_i\lambda\left(\bigcup_{r\in S_{m_i,k_i}\cap T}\mathrm{int}I_{m_i,k_i}(r)\right),\end{equation*}and the compactness of $K$ allows us to select $t_{n_{i-1}+1},\dots,t_{n_{i}}$ from $S_{m_i,k_{m_i}}\cap T$ such that 
\begin{equation*}\lambda\left(\bigcup_{r\in\{t_{n_{i-1}+1},\dots,t_{n_{i}}\}}\mathrm{int}I_{m_i,k_i}(r)\right)\geq \gamma_{i}\lambda\left(\bigcup_{r\in S_{m_i,k_i}\cap T}\mathrm{int}I_{m_i,k_i}(r)\right) 
\end{equation*}
and therefore, combined with \eqref{final1},
\begin{equation*}
\lambda\big\{x: f_{t_{n_{i}+j}}(x)\geq2^{m_i} \textrm{ for at least one } j\in \{1,2,..,n_{i}-n_{i-1}\} \big\} \geq \gamma_{i}^{3},
\end{equation*}
which concludes the construction and finishes the proof, since $(\gamma_i)_{i\in \NN}$ converges to $1$.
\end{proof}

\section{Necessity of discontinuity of $f_t$ in Theorem~\ref{Ex2}}

In this section we prove our final and most general result, namely that dropping continuity with respect to $x$ for almost every $t$ is essential in order to be able to construct an extremely irregular curve like in Theorem~\ref{Ex2}. We show, that if the function $f_t$ is continuous for every $t$, then a refined version of Lusin's theorem in $2$ variables holds. 

\begin{theorem}\label{Lus}
	Let $f:[0,1]\times\OO\longrightarrow\mathbb{R}$ be Borel measurable such that $f_t$ is a continuous function for $\mu$-a.e $t\in\OO$. Then, for every $\eE>0$, there is a set $T_\eE\subset[0,1]$ with $\mu(T_\eE^C)<\eE$ such that the restriction
	$$ f_{|T_\eE\times\OO}:(T_\eE\times\OO,|\cdot|\otimes|\cdot|)\longrightarrow(\RR,|\cdot|)$$
	is a continuous function.
\end{theorem}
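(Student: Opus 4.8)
The plan is to combine Egorov's theorem, applied to the moduli of continuity of the sections $f_t$, with the classical one-variable Lusin theorem, applied to the sections $f(\cdot,x)$ at a countable dense set of spatial points. After discarding the $\mu$-null set on which continuity fails (which I absorb into $T_\eE^C$), I may assume $f_t\in C(\OO)$ for every $t$, and since $\OO=[0,1]$ is compact each $f_t$ is then uniformly continuous. The target is joint continuity on $T_\eE\times\OO$, which I will obtain from two ingredients: uniform equicontinuity of $\{f_t:t\in T_\eE\}$, and continuity of the individual sections $f(\cdot,x)$ on $T_\eE$.

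For the equicontinuity, I fix a countable dense set $D\subset\OO$ and for $\delta>0$ put
$$\omega_t(\delta)=\sup\{|f_t(x)-f_t(y)|:x,y\in D,\ |x-y|\leq\delta\}.$$
Continuity of $f_t$ makes this equal to the true modulus of continuity, and as a countable supremum of the Borel maps $t\mapsto|f(t,x)-f(t,y)|$ it is Borel in $t$. For each $t$ the sequence $\omega_t(1/n)$ decreases to $0$, so Egorov's theorem furnishes $E_1\subset\Ii$ with $\mu(E_1^C)<\eE/2$ on which $\omega_t(1/n)\to 0$ uniformly; this uniformity is exactly uniform equicontinuity of $\{f_t:t\in E_1\}$.

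For the section continuity, I enumerate $D=\{x_m:m\in\NN\}$ and note that each $f(\cdot,x_m):\Ii\to\RR$ is Borel. Applying the one-variable Lusin theorem to each gives closed sets $F_m$ with $\mu(F_m^C)<\eE/2^{m+2}$ and $f(\cdot,x_m)|_{F_m}$ continuous; setting $E_2=\bigcap_m F_m$ leaves $\mu(E_2^C)<\eE/4$ and makes every section $f(\cdot,x_m)$ continuous on $E_2$. I then take $T_\eE=E_1\cap E_2$, so $\mu(T_\eE^C)<\eE$. To check joint continuity, given $(t_n,y_n)\to(t,y)$ inside $T_\eE\times\OO$ I choose $x_m\in D$ near $y$ and split
$$|f(t_n,y_n)-f(t,y)|\leq|f(t_n,y_n)-f(t_n,x_m)|+|f(t_n,x_m)-f(t,x_m)|+|f(t,x_m)-f(t,y)|.$$
The outer two terms are small by the equicontinuity on $E_1$ (uniformly in the time variable), while the middle term vanishes as $n\to\infty$ by continuity of $f(\cdot,x_m)$ on $E_2$.

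The step I expect to be most delicate is the measurability of $\omega_t(\delta)$ needed for Egorov: this hinges on replacing the supremum over $\OO$ by one over the countable set $D$, which is legitimate only because the sections are already continuous. A secondary point is that equicontinuity is invoked in the first and third terms of the final estimate simultaneously, so it is essential that all the $t_n$ together with $t$ lie in the \emph{single} set $E_1$ rather than in separately chosen large-measure sets; the reduction from the dense points $x_m$ to an arbitrary $y\in\OO$ likewise leans on this uniform equicontinuity.
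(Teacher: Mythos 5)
Your proposal is correct and follows essentially the same route as the paper: Egorov applied to the oscillation functionals to extract an equicontinuous subfamily, the one-variable Lusin theorem applied to the sections $f(\cdot,x_m)$ over a countable dense set, and the same three-term triangle inequality to conclude joint continuity. Your remark on securing Borel measurability of $\omega_t(\delta)$ by taking the supremum over a countable dense set is a point the paper's Lemma~\ref{eqcon} passes over silently, so your version is, if anything, slightly more careful.
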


Note that in Theorem~\ref{Lus} only the fact that $f$ is a measurable function in $[0,1]\times\OO$ is needed and there is no $\mathcal{L}^p$-continuity assumed. However, as $\mathcal{L}^p$-continuity guarantees that $f$ is a measurable function in $[0,1]\times\OO$, the claimed necessity in Theorem~\ref{Ex2} is a straightforward consequence.

\begin{corollary}
	Let $f$ be a continuous function from $[0,1]$ to $\mathcal{L}^{p}(\Omega)$. If $f_t\in \mathcal{L}^{p}(\Omega)$ is continuous for $\mu$-a.e $t\in[0,1]$, then the assertion in Theorem~\ref{Lus} holds.
\end{corollary}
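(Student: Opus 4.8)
The plan is to deduce the corollary directly from Theorem~\ref{Lus}. By hypothesis $f_t$ is continuous for $\mu$-a.e.\ $t$, so the only point that remains to be checked is that $f\colon\Ii\times\OO\Ra\RR$, defined pointwise by $f(t,x)=f_t(x)$, is Borel measurable as a function of the two variables; once this is established, Theorem~\ref{Lus} applies verbatim. Since the paper insists on pointwise constructions (cf.\ the Remark in Section~1.2), I would prove joint measurability of the genuine pointwise representative rather than merely of some $\Ll^p$-version, using the a.e.\ continuity of the slices to pin the values down.

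First I would record the two coordinatewise regularity properties of $f$ on the full-measure set $E=\{t\in\Ii: f_t\text{ is continuous}\}$. Continuity in $x$ for each $t\in E$ is immediate. For measurability in $t$ for each fixed $x\in\OO$, I would introduce the averages
$$ A_h f_t(x)=\frac{1}{\lL\Kl B(x,h)\cap\OO\Kr}\int_{B(x,h)\cap\OO} f_t\,\di\lL . $$
Because $\OO=[0,1]$ has finite measure, $\chi_{B(x,h)\cap\OO}\in\Ll^{p'}(\OO)$ for the conjugate exponent $p'$, so H\"older's inequality gives $|A_h f_t(x)-A_h f_s(x)|\leq \lL(B(x,h)\cap\OO)^{-1}\|\chi_{B(x,h)\cap\OO}\|_{p'}\,\|f_t-f_s\|_p$; hence, by $\Ll^p$-continuity of $s\mapsto f_s$, the map $t\mapsto A_h f_t(x)$ is continuous, and in particular Borel measurable, for every fixed $x$ and $h$. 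For $t\in E$ the function $f_t$ is continuous at $x$, so $A_{1/k}f_t(x)\Ra f_t(x)$ as $k\Ra\infty$. Thus $t\mapsto f_t(x)$, restricted to $E$, is a pointwise limit of measurable functions and is therefore measurable.

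With continuity in $x$ and measurability in $t$ in hand, I would invoke the standard Carath\'eodory argument: approximating $f$ by the step functions $f^{(n)}(t,x)=f_t(q_{n}(x))$, where $q_n(x)$ denotes the left endpoint of the dyadic subinterval of $\OO$ of length $2^{-n}$ containing $x$, one sees that each $f^{(n)}$ is jointly measurable on $E\times\OO$ (on each spatial strip it is a measurable function of $t$ alone) and that $f^{(n)}(t,x)\Ra f(t,x)$ by continuity of $f_t$. Hence $f$ is jointly Borel measurable on $E\times\OO$, and setting $f\equiv 0$ on the $\mu$-null slab $(\Ii\setminus E)\times\OO$ yields a Borel measurable function on all of $\Ii\times\OO$ whose slices are continuous for every $t\in E$, i.e.\ for $\mu$-a.e.\ $t$. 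Theorem~\ref{Lus} now applies and produces the desired set $T_\eE$.

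The main obstacle I anticipate is precisely the passage from $\Ll^p$-continuity, which only controls equivalence classes, to measurability of the concrete pointwise function $t\mapsto f_t(x)$; the averaging step is what bridges this gap, and it is essential here that the slices $f_t$ are continuous for a.e.\ $t$, so that the class-level quantity $A_{1/k}f_t(x)$ actually converges to the pointwise value $f_t(x)$. Once joint measurability is secured, no further work beyond citing Theorem~\ref{Lus} is needed.
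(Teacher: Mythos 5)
Your proposal is correct and follows the same route as the paper: reduce the corollary to Theorem~\ref{Lus} by verifying that $f$ is jointly Borel measurable on $[0,1]\times\OO$. The difference is that the paper disposes of this in a single sentence, asserting that $\Ll^p$-continuity of $t\mapsto f_t$ by itself guarantees joint measurability, whereas you actually prove it --- and your extra work is not redundant. For the genuine pointwise representatives the paper insists on (Remark in Section~1.2), $\Ll^p$-continuity alone is \emph{not} sufficient: the curve $f_t=\chi_{\{g(t)\}}$ is constant (hence continuous) in $\Ll^p$ for an arbitrary $g:[0,1]\to\OO$, yet its pointwise function is the indicator of the graph of $g$, which fails to be Borel when $g$ is not Borel. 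Your argument supplies exactly the missing ingredient: the a.e.\ continuity of the slices pins the pointwise value $f_t(x)$ down as the limit of the averages $A_{1/k}f_t(x)$, which depend only on the $\Ll^p$-class and are continuous in $t$ by H\"older together with $\Ll^p$-continuity of the curve; joint measurability then follows from the standard Carath\'eodory step-function approximation. Two minor points to tidy up: the set $E$ of parameters with continuous slice need not itself be Borel, so replace it by a Borel subset of full measure before setting $f\equiv 0$ on the complementary slab; and since Theorem~\ref{Lus} is then applied to the modified function, intersect the resulting $T_\eE$ with that full-measure Borel set so that the conclusion transfers back to the original $f$. Neither issue affects the validity of the argument.
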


Before we proceed with the proof of Theorem~\ref{Lus} we would like to point out the difference between Theorem~\ref{Lus} and the classical result of Lusin. In Lusin's theorem an arbitrarily small set of $\mathcal{I}\times\OO$ is removed in order to establish continuity on the remainder. In our case the stronger assumption of continuity with respect to one variable entails the information that this small set is of the form $T_{\eE}^C\times\OO$, so it is only necessary to remove a ''slice`` in the space-time domain.\\

For the proof of Theorem~\ref{Lus} we also need the following preliminary result.

\begin{lemma}\label{eqcon}
	Let $F=\{f_{t}; t\in\mathcal{I}\}$ be a family of continuous functions. Then, for every $\eE>0$, there is a set $S_{\eE}\subset\mathcal{I}$, such that $\mu(S_{\eE}^C)<\eE$, with the property that $F_\eE:=\{f_{t}; t\in S_{\eE}\}$ is equicontinuous.
\end{lemma}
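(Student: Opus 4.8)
The plan is to apply Egorov's theorem to the moduli of continuity of the functions $f_t$. For $t\in\Ii$ and $\delta>0$ I would define
$$\omega_t(\delta)=\sup\{|f_t(x)-f_t(y)|:x,y\in\OO,\;|x-y|\le\delta\}.$$
Since each $f_t$ is continuous on the compact interval $\OO$, it is uniformly continuous, so $\omega_t(\delta)\to 0$ as $\delta\to 0$, for every $t\in\Ii$. Observe that the family $F_\eE=\{f_t;\,t\in S_\eE\}$ is equicontinuous precisely when $\sup_{t\in S_\eE}\omega_t(\delta)\to 0$ as $\delta\to 0$. The task therefore reduces to upgrading the convergence $\omega_t(\delta)\to 0$, which holds pointwise in $t$, to convergence that is uniform in $t$ after deleting a set of indices of measure less than $\eE$.

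First I would fix a decreasing sequence $\delta_n\downarrow 0$ and set $g_n(t)=\omega_t(\delta_n)$. Because $\omega_t$ is nondecreasing in its argument, $(g_n)$ is a monotone sequence converging pointwise to $0$ on $\Ii$. The crucial technical step is to verify that each $g_n$ is a measurable function of $t$. Here I would use the continuity of $f_t$ to restrict the supremum to a fixed countable set $D$ that is dense in $\{(x,y)\in\OO\times\OO:|x-y|\le\delta_n\}$, obtaining
$$g_n(t)=\sup_{(x,y)\in D}|f(t,x)-f(t,y)|.$$
Since $f$ is jointly Borel measurable (a hypothesis inherited from the ambient setting of Theorem~\ref{Lus}), each section $t\mapsto f(t,x)$ is measurable, hence so is each $t\mapsto|f(t,x)-f(t,y)|$; a countable supremum of measurable functions is measurable, so $g_n$ is measurable.

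With measurability established, I would apply Egorov's theorem on the finite measure space $(\Ii,\mu)$ to the sequence $g_n\to 0$: for the given $\eE>0$ there is a set $S_\eE\subset\Ii$ with $\mu(S_\eE^C)<\eE$ on which $g_n\to 0$ uniformly. Unwinding this, for every $\eta>0$ there is an $N$ with $\omega_t(\delta_N)<\eta$ for all $t\in S_\eE$; taking $\delta=\delta_N$ and using monotonicity of $\omega_t$ yields $|f_t(x)-f_t(y)|\le\omega_t(\delta)<\eta$ whenever $|x-y|\le\delta$ and $t\in S_\eE$. This is exactly the (uniform) equicontinuity of $F_\eE$.

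I expect the main obstacle to be the measurability of $t\mapsto\omega_t(\delta)$; once this is secured, the rest is a direct invocation of Egorov. The measurability rests on combining two ingredients that must be kept distinct: the joint Borel measurability of $f$, which gives measurability of the individual maps $t\mapsto|f(t,x)-f(t,y)|$, and the continuity of each $f_t$, which is what permits the replacement of the uncountable supremum over all admissible pairs $(x,y)$ by a countable one over the dense set $D$.
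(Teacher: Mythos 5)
Your proof follows essentially the same route as the paper's: apply Egorov's theorem to the oscillation functionals $\omega_{n}(t)=\omega_{1/n}(f_t)$ and translate their uniform convergence to zero on $S_\eE$ into equicontinuity of $F_\eE$. You are in fact slightly more careful than the paper, which invokes Egorov without checking that $t\mapsto\omega_n(t)$ is measurable; your reduction of the supremum to a countable dense set of pairs, using joint measurability of $f$ and continuity of each $f_t$, supplies exactly that omitted detail.
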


\begin{proof}
	Let $\oO_\delta$ denote the $\dD$-oscillation functional,
	\begin{equation}\label{omega}
	\omega_\dD(g):= \sup\{|g(x)-g(y)|:\:|x-y|< \delta\}
	\end{equation}
	and set $\omega_{n}(t)=\omega_{\frac{1}{n}}(f_t)$ on $F$. We have $\lim_{n\Ra\infty}\omega_{n}(t)=0$ for every $t$, since all $f_t$ are continuous. From Egorov's Theorem (see, e.g. \cite[Theorem~8.3]{Ox71}) we deduce that for every $\eE>0$, there exists a set $S_{\eE}$ with $\mu(S_{\epsilon}^C)<\eE$ such that $\omega_{n|\:{S_\eE}}$ converges uniformly. Now, fixing $\eE>0$, we wish to show that the uniform convergence of $\omega_{n}$ to zero implies equicontinuity of $F_{\eE}$.
 
	Let $\eta>0$. Since $\lim_{n\to\infty}\omega_{n}= 0$ uniformly on $S_{\eE}$, there exists $n_{0}\in\NN$ such that for all $n>n_{0}$, $$\oO_{n}<\eta\textrm{ for every }t \in S_{\eE}.$$ Hence, choosing $\dD=\frac{1}{n_0}$ in \eqref{omega} and evaluating $\oO_\dD$ on $F_\eE$, we obtain
	\begin{displaymath}
		\sup\{|f_t(x)-f_t(y)|:\: |x-y|<\delta\}<\eta. 
	\end{displaymath}
	This means $F_\eE$ is equicontinuous, since $\eta$ was choosen arbitrarily.
\end{proof}

From Lemma~\ref{eqcon} and Lusin's Theorem we can finally deduce Theorem~\ref{Lus}.
\begin{proof}[Proof of Theorem \ref{Lus}]

Since $f$ is Borel measurable, we have that $f^{x},$ where $f^x(t):=f(t,x)$, is a Borel measurable function for every $x\in[0,1]$. We can therefore choose a dense countable subset $X=\{x_{n};n\in\NN\}\subset\OO$ such that $f^{x_{n}}$ is Borel measurable for every $n\in\NN$. By Lusin's theorem, for every function $f^{x_{n}},n\in\NN$, and any fixed small parameter $\eE$ there is a set $U_{n}\subset [0,1]$ such that $\mu((U_{n})^C)<\frac{\eE}{2^{n+1}}$ and $f^{x_{n}}_{|_{U_n}}$ is continuous. Now define $V_{\frac{\eE}{2}}:=\bigcap_{n=1}^{\infty}U_n$ and apply Lemma~\ref{eqcon} to the family $\{f(t,\cdot);t\in[0,1]\}$ to obtain a set $S_{\frac{\eE}{2}}$ such that $\{f(t,\cdot);t\in S_{\frac{\eE}{2}}\}$ is equicontinuous. Now, $T_\eE:=S_{\frac{\eE}{2}}\cap V_{\frac{\eE}{2}}$, is the desired set. It remains to prove that the restriction $f_{|T_\varepsilon\times\OO}$ is a continuous function, and we are going to use the sequential definition of continuity to do so. 

Let $(t_{0},y_{0})\in T_{\varepsilon}\times\OO$. Let also a sequence $(t_n,y_n)\in T_{\varepsilon}\times\OO$ such that  $\lim_{n\rightarrow\infty}(t_{n},y_{n})=(t_{0},y_{0}).$ Finally let $\eta>0$. By equicontinuity of $\{f(t,\cdot);t\in T_{\varepsilon}\}$ there exists a $\delta>0$
such that 
$$|f(t,y')-f(t,y)|<\frac{\eta}{3} \mbox{ for all } t \in T_{\epsilon} \mbox{ and } y',y\in\OO \mbox{ with }|y'-y|<\delta.$$ 
By density of $X$ in $\OO$, there exists a $x_{0}$ such that 
$|y_{0}-x_{0}|<\frac{\delta}{2}$.
Since $y_n\rightarrow y_{0}$, we can find a $n_{1}$ such that $|y_{n}-y_{0}|<\frac{\delta}{2}$ and thus $|y_{n}-x_{0}|<\delta
,\forall n\in \NN$ with $n>n_{1}.$
Furthermore by continuity of $f(\cdot,x_{0})$ in $T_{\epsilon}$ there exists a $n_{2}$ such that
$\forall n>n_{2}$ we have  $\|f(t_{n},x_{0})-f(t_{0},x_{0})\|<\frac{\eta}{3}$ 

Now, for $n>max\{ n_{1},n_{2}\}$ we have
\begin{align*}
\|f(t_n,y_n)-f(t_0,y_0)\|&\leq\|f(t_n,y_n)-f(t_n,x_0)\|+\|f(t_n,x_0)-f(t_0,x_0)\|\\
&+\|f(t_0,x_0)-f(t_0,y_0)\|\leq\frac{\eta}{3}+\frac{\eta}{3}+\frac{\eta}{3}=\eta
\end{align*}

\end{proof}
\medskip

\end{document}